\documentclass[12pt,reqno]{amsart}
\usepackage[usenames]{color}
\usepackage{amsmath, amsfonts, epsfig, amssymb, graphics}
\usepackage[colorlinks=true,citecolor=cyan]{hyperref}
\textheight=624pt \textwidth=450pt
\oddsidemargin=18pt \evensidemargin=18pt
\usepackage[mathscr]{euscript}

\def\mbf#1{{\mathbf{#1}}}

\newcommand{\CW}[1][W]{\mathscr{C}_{#1}}
\newcommand{\Harm}[1][W]{\mathscr{H}_{#1}}

\newcommand{\poly}{\mathcal{R}_n^{(r)}}
\newcommand{\Ideal}[1][W]{\mathscr{I}_{#1}}

\def\N{{\mathbb N}}

\def\C{{\mathbb C}}
\def\S{{\mathbb S}}

\def\scalar#1#2{{\langle#1,#2 \rangle}}

\newcommand{\qbinom}[2]{\genfrac{[}{]}{0pt}{}{#1}{#2}}
\newcommand{\charac}{\raise 2pt\hbox{$\chi$}}

\def\auteur#1{{\sc #1}}
\def\titreref#1{{\em #1}}
\def\vol#1{{\bf #1}}

\newcommand{\pref}[1]{{\rm (\ref{#1})}}
\def\defn#1{\bleu{\bf #1}}

\def\bleu{\textcolor{blue}}
\def\rouge{\textcolor{red}}

\newdimen\carrelength
\setlength{\unitlength}{3.5mm}
\setlength{\carrelength}{2.5mm}

\newtheorem{theorem}{\bleu{Theorem}}[section]

\newtheorem{lemma}{\bleu{Lemma}}[section]

\begin{document}

\title[\rouge{Multivariate Diagonal Harmonics}]{{\bleu{Multivariate Diagonal Coinvariant Spaces\\ for\\
Complex Reflection Groups}}}
\author[F.~Bergeron]{Fran\c{c}ois Bergeron}
\date{March 2009. This work is supported by NSERC-Canada}
\maketitle
\begin{abstract}
For finite complex reflection groups, we consider the graded $W$-modules of diagonally harmonic polynomials in $r$ sets of variables, and show that 
associated Hilbert series may be described in a global manner, independent of the value of $r$.
\end{abstract}
 \parskip=0pt

{ \setcounter{tocdepth}{1}\parskip=0pt\footnotesize \tableofcontents}
\parskip=8pt


\section{Introduction}
For  finite complex reflection groups $W=G(m,p,n)$, we study the diagonal coinvariant space $\CW$ for $W$, in several (say $r$)  sets  of $n$ variables. Here, the use of the term diagonal refers to the fact that $W$ is considered as a diagonal subgroup of $W^r$, acting on the $r^{\rm th}$-tensor power $\poly$ of the symmetric algebra of the defining representation of $W$.  
The space considered, $\CW^{(r)}$, is simply the quotient of $\poly$ by the ideal generated by constant-term-free (diagonal) $W$-invariants.
We shall see that the associated multigraded Hilbert series, denoted $\CW^{(r)}(q_1,\ldots,q_r)$ (which is symmetric in the variables  $\mbf{q}:=(q_1,\ldots,q_r$),  can be described in an uniform manner as a positive coefficient linear combination of Schur polynomials
    \begin{equation}\label{schur_generic}
              \bleu{\CW^{(r)}(\mbf{q}) = \sum_\mu c_\mu\, s_\mu(\mbf{q}),}
       \end{equation}
 with the $c_\mu$ independent of $r$, and $\mu$ running through a finite set of integer partitions that depend only on the group $W$. This expression has the striking feature that it gives one global formula that specializes to the dimension of $\CW^{(r)}$, for each individual $r$. 
 To better see what is striking here,  it is worth recalling that , although the case $r=1$ has a long history \cite{chevalley, sheppard, steinberg}, it is only recently that  the special case $r=2$ has been somewhat settled \cite{cherednik, gordon, griffeth, haimanvanishing}. However much still needs to be done along these lines as discussed in~\cite{HHLRU}. Some headway has recently been made in the case $r=3$ (see~\cite{trivariate, loktev}), but the general case is still wide open.

\section{Definitions and discussion}

For a rank $n$ complex reflection group $W$, we may consider its ``diagonal'' action on the $\N^r$-graded space $\poly:=\C[X]$,  of polynomials in the $r\times n$ variables 
            $$X=\begin{pmatrix}
               x_{11} & x_{12} & \cdots & x_{1n}\\
               \bleu{x_{21}}& \bleu{x_{22}} & \bleu{ \cdots} & \bleu{x_{2n}}\\
               \vdots & \vdots & \ddots & \vdots\\
               x_{r1} & x_{r2} & \cdots & x_{rn}
             \end{pmatrix}.$$
We consider each row of $X$ as a  set of $n$ variables. Thinking of elements $w\in W$ as $n\times n$  matrices, the action of $w$  is simply the map sending $f(X)$ to $f(X\,w)$. Naturally, \defn{$W$-invariant polynomials} in $\C[X]$ are those that fixed by any element of $W$, i.e.:
  \begin{displaymath} 
               f(X\cdot w)=f(X),\qquad \hbox{for all}\ w\in W.
  \end{displaymath} 
The \defn{diagonal coinvariant space} $\CW^{(r)}$ is defined to be the quotient
      \begin{equation}
          \bleu{\CW^{(r)}:=\poly/\Ideal^{(r)}},
       \end{equation}
where $\Ideal^{(r)}$ is the ideal generated by constant-term-free $W$-invariant polynomials in $\poly$. 

For an integer $r\times n$ matrix $A=(a_{ij})$, we denote by $X^A$ the monomial
   \begin{equation}
        \bleu{X^A:=X_1^{A_1} X_2^{A_2}\cdots X_n^{A_n}},
     \end{equation}
 where $X_j$ and $A_j$ respectively stand for the $j^{\rm th}$ column of $X$ and $A$, and where
   \begin{displaymath} \bleu{X_j^{A_j}:=\prod_{i=1}^r x_{ij}^{a_{ij}}}. \end{displaymath}
The ideal $\Ideal$ is homogeneous with respect to the (vector) \defn{degree} 
  \begin{equation}\label{defn_deg}
       \bleu{\deg(X^A):=\sum_{j=1}^n A_j},
  \end{equation}
and the \defn{total degree}, \bleu{${\rm tdeg}(X^A)$}, is the sum of the components of $\deg(X^A)\in N^r$.   The diagonal $W$-action is clearly degree preserving. 
 
 The space $ \CW^{(r)}$ may be turned into a polynomial representation of $GL_r$, using the fact that the usual action of $GL_r$ on $\poly$ (that sends $f(X)$ to $f(MX)$, when  $M\in GL_r$) commutes with the $W$-action. Hence the $GL_r$-action preserves the ideal $\Ideal^{(r)}$.  Recall that the \defn{$GL_r$-character} of  $ \CW^{(r)}$ is the symmetric polynomial $ \CW^{(r)}(\mbf{q})$, in the variables $\mbf{q}$,
obtained by taking the trace of the linear transform 
                 \begin{displaymath} f(X)\longmapsto f(QX),\end{displaymath} 
where $Q$ is the diagonal matrix $[q_1,\ \ldots\ ,q_r]$. 
Since characters of irreducible polynomial $GL_r$-modules correspond precisely to Schur polynomials $s_\mu(\mbf{q})$, we get an expansion of form \pref{schur_generic} for $ \CW^{(r)}(\mbf{q})$, with the $c_\mu$ giving the multiplicity of the representation having character $s_\mu(\mbf{q})$. 
Moreover, the ideal $\Ideal$ being homogeneous, the variable $q_i$ serves  as  a ``degree counter'' for the variables $x_{ij}$,  $1\leq j\leq n$, so that
we may also consider $\CW^{(r)}(\mbf{q})$ as the \defn{Hilbert series} of $\CW^{(r)}$. 
 
\subsection*{Harmonic polynomials} Some properties of $\CW^{(r)}$ are better formulated in the context of the isomorphic space $\Harm^{(r)}$, of ``diagonally harmonic polynomials''. 
In fact, we think of this new space as canonical representatives for elements of $\CW^{(r)}$.
More precisely, for each of the variables $x_{ij} \in X$, consider the
partial derivation denoted by $\partial{x_{ij}}$. Given polynomial $f(X)$, we denote by $f(\partial X)$  the
differential operator obtained by replacing the variables in $X$ by the
corresponding derivation.  The space $\bleu{\Harm^{(r)}}$ of
\defn{diagonally harmonic polynomials}  is the set of 
polynomial solutions, $g(X)$, of the system of partial differential
equations
\begin{equation}\label{defn_harmonic}
    \bleu{f(\partial X ) (g(X)) = 0},
\end{equation}
with one equation for each $ f(X)\in \Ideal$.
Evidently, we need only consider a generating set of $\Ideal$ for these equations to characterize all solutions. 
An elementary proof (see~\cite{livre}) that $\CW^{(r)}$ and $\Harm^{(r)}$ are isomorphic relies on the fact that $\Harm^{(r)}$ appears as the orthogonal complement of $\Ideal$ for the scalar product
\begin{equation}\label{def_scalar2}
    \bleu{ \scalar{f(X)}{g(X)}=f(\partial X)g(X)\big|_{X=0}},
 \end{equation}
where $\bleu{p(X)\big|_{X=0}:=p(0)}$ is the \defn{constant term} of $p(X)$. 
It is easy to see that the set of monomials forms an orthogonal basis for this scalar product, and hence deduce that it is $W$-invariant. One also checks readily that 
\begin{lemma}
$\Harm^{(r)}$ is the orthogonal complement of $\Ideal^{(r)}$:
               \begin{equation}\bleu{ \Harm^{(r)}=\left( \Ideal^{(r)} \right)^\perp}.\end{equation}  
\end{lemma}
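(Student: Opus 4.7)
The plan is to unwind both sides of the claimed equality directly from the definitions, using that $\Ideal^{(r)}$ is closed under multiplication by arbitrary polynomials, not just under addition.

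First, I would record the two inclusions we need. A polynomial $g(X)$ lies in $\Harm^{(r)}$ iff $f(\partial X)(g(X)) = 0$ as a polynomial for every $f \in \Ideal^{(r)}$, while $g \in (\Ideal^{(r)})^\perp$ iff $\scalar{f}{g} = f(\partial X)(g(X))\big|_{X=0} = 0$ for every $f \in \Ideal^{(r)}$. The forward inclusion $\Harm^{(r)} \subseteq (\Ideal^{(r)})^\perp$ is then immediate: if a polynomial vanishes identically, then in particular its constant term vanishes.

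For the reverse inclusion, the key tool is that for any monomial $X^B$ and any $f \in \Ideal^{(r)}$, we still have $X^B f \in \Ideal^{(r)}$, since $\Ideal^{(r)}$ is an ideal. Then because the differential operators $X^B(\partial X)$ and $f(\partial X)$ have constant coefficients, they commute, so
\begin{equation*}
  \scalar{X^B f}{g} = \bigl(X^B f\bigr)(\partial X)(g(X))\big|_{X=0}
  = X^B(\partial X)\!\left[ f(\partial X)(g(X)) \right]\big|_{X=0}.
\end{equation*}
If $g \in (\Ideal^{(r)})^\perp$ then the left-hand side is zero for every $B$, so every expression of the form $X^B(\partial X)\,h(X)\big|_{X=0}$ vanishes, where $h(X) := f(\partial X)(g(X))$. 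But applying $X^B(\partial X)$ and then setting $X=0$ is precisely $B!$ times the coefficient of $X^B$ in $h(X)$ (the statement preceding the lemma already noted that monomials form an orthogonal basis, which is exactly this computation). Since all Taylor coefficients of $h(X)$ vanish, $h(X) \equiv 0$, and hence $g \in \Harm^{(r)}$.

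No real obstacle is expected: the only thing to be careful about is the step that requires $\Ideal^{(r)}$ to be stable under multiplication by all monomials, since it is this ideal property (not merely being a linear subspace) that converts the constant-term condition defining $(\Ideal^{(r)})^\perp$ into the stronger pointwise vanishing condition defining $\Harm^{(r)}$.
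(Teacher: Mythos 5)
Your proof is correct and follows essentially the same route as the paper's: both hinge on the fact that $X^B f(X)$ stays in $\Ideal^{(r)}$ and that pairing against $X^B$ extracts (up to a nonzero factorial) the coefficient of $X^B$ in $f(\partial X)g(X)$. The only cosmetic difference is that you kill all Taylor coefficients of $f(\partial X)g(X)$ at once, whereas the paper argues via a leading term for a term order; you also spell out the easy inclusion $\Harm^{(r)}\subseteq(\Ideal^{(r)})^\perp$, which the paper leaves implicit.
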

\begin{proof}[\bleu{\bf Proof}.]
 Indeed,  let $g(X)$ be in $\Ideal^\perp$, and consider the leading term (for any suitable term order) of
$f(\partial X)g(X) = c\, X^A+ \ldots$,
 if any, for some $f(X)$ in $\Ideal$.
Since $X^A\,f(X)$ also lies in $\Ideal$, we must have $\scalar{X^A\,f_i(X)}{g(X)} =0$, but this means precisely that $c=0$. Hence we must have $f(\partial X)G(X)=0$. 
\end{proof}

The case $r=1$, i.e.: $\Harm^{(1)}$,  gives rise to the classical theorems of \cite{sheppard, weyl} regarding $W$-harmonic polynomials and the coinvariant space for $W$. The case $r=2$, for the symmetric group $W=\S_{n}$, corresponds to  the space of diagonal harmonic polynomials of Haiman-Garsia, which is of dimension $(n+1)^{n-1}$. Its alternating part has bigraded Hilbert series given by the now famous $q,t$-Catalan polynomials $C_{n+1}(q,t)$.  Again for $r=2$, the case of other reflection groups has also been studied for other groups $W$ (see~\cite{gordon} for instance).

Any $f(X)$ lying in $\Harm^{(r-1)}$ also lies in $\Harm^{(r)}$, so that we have
              \begin{displaymath}\bleu{ \Harm^{(1)}\subseteq \Harm^{(2)}\subseteq \ldots \subseteq  \Harm^{(r)}\subseteq \ldots }.\end{displaymath}
In particular, 
\begin{equation}\label{restriction}
   \bleu{\Harm^{(r-1)}(q_1,\ldots,q_{r-1})=\Harm^{(r)}(q_1, \ldots,q_{r-1},0)}.
 \end{equation}
It is easy to adapt an argument of~\cite{garsia_haiman}, for the case $r=2$, to show that the total degree of elements of $\Harm^{(r)}$ is bounded by the maximal total degree of an element of $\Harm^{(1)}$ (which is well known to be the degree of the Jacobian of $W$). Hence, only a finite number of Schur functions may appear in  the expansions of the $\Harm^{(r)}(\mbf{q})$, $r\geq 1$. This implies that $\Harm^{(r)}(\mbf{q})$ affords an expansion in the $s_\mu(\mbf{q})$, with $\mu$ independent of $r$. For this reason, we say that  this expansion is \defn{universal}, and we drop the ``$(r)$'' superscript and write simply $\Harm(\mbf{q})$. In fact, it follows from these considerations, and further discussion in section~\ref{preuves} that we have the following.

\begin{theorem}\label{mainthmhilb}
   For any rank $n$ complex reflection group $W=G(p,1,n)$,  the Hilbert series of the diagonal coinvariant space in $r$ sets of variables affords an expansion, in terms of Schur functions $s_\mu(q_1,\ldots ,q_r)$,  with  positive integer coefficients $c_\mu$ that are independent of $r$, the sum being over the set of partitions $\mu$ of integers $d$:
     \begin{equation}\label{interval_entiers}
      \bleu{ 0\leq d \leq \frac{n(r\,n+r-2)}{2}},
     \end{equation}
   and having at most $\bleu{n}$ parts.
\end{theorem}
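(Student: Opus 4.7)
The plan is to combine three structural features of $\Harm^{(r)}$: its $GL_r$-module structure (yielding Schur positivity), the $(GL_r,GL_n)$-duality on $\poly$ (bounding the number of parts of $\mu$), and the restriction formula~(\ref{restriction}) (forcing the coefficients to stabilize in $r$). A fourth ingredient, the total-degree bound asserted just before the statement, supplies the numerical range on $|\mu|$.

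First, since the $GL_r$-action $f(X)\mapsto f(MX)$ on $\poly$ commutes with the diagonal $W$-action, the ideal $\Ideal^{(r)}$ is $GL_r$-stable, and the quotient $\CW^{(r)}=\poly/\Ideal^{(r)}$ carries a polynomial $GL_r$-module structure whose character coincides, via the isomorphism $\Harm^{(r)}\cong\CW^{(r)}$, with $\Harm^{(r)}(\mbf{q})$. Decomposing into $GL_r$-irreducibles gives
\begin{equation*}
\Harm^{(r)}(\mbf{q})=\sum_\mu c^{(r)}_\mu\,s_\mu(q_1,\ldots,q_r),\qquad c^{(r)}_\mu\in\N,
\end{equation*}
with each $c^{(r)}_\mu$ equal to the dimension of the $V_\mu$-isotypic multiplicity space in $\CW^{(r)}$. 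This establishes Schur positivity with non-negative integer coefficients. To force $\ell(\mu)\leq n$, apply the classical $(GL_r,GL_n)$-duality on $\poly\cong\mathrm{Sym}^{\bullet}(\C^r\otimes\C^n)$: the Cauchy decomposition writes $\poly$ as $\bigoplus_\lambda V^{GL_r}_\lambda\otimes V^{GL_n}_\lambda$, in which every $\lambda$ has $\ell(\lambda)\leq\min(r,n)$. Since $\CW^{(r)}$ is a $GL_r$-module quotient of $\poly$, no Schur polynomial $s_\mu$ with $\ell(\mu)>n$ can appear in its character.

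For universality in $r$, substitute $q_r=0$ into the Schur expansion and invoke~(\ref{restriction}) together with the standard identity $s_\mu(q_1,\ldots,q_{r-1},0)=s_\mu(q_1,\ldots,q_{r-1})$ when $\ell(\mu)\leq r-1$, and $s_\mu(q_1,\ldots,q_{r-1},0)=0$ when $\ell(\mu)=r$. Linear independence of Schur polynomials in $r-1$ variables then yields $c^{(r)}_\mu=c^{(r-1)}_\mu$ whenever $\ell(\mu)\leq r-1$. Combined with $\ell(\mu)\leq n$, the sequence $(c^{(r)}_\mu)_r$ stabilizes once $r\geq n$, so we may define $c_\mu:=c^{(r)}_\mu$ for any such $r$. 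For smaller $r$, the Schur polynomials with $\ell(\mu)>r$ vanish identically in $r$ variables, so the uniform formula $\Harm^{(r)}(\mbf{q})=\sum_\mu c_\mu\,s_\mu(\mbf{q})$ remains valid for every $r\geq 1$.

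Finally, the bound $|\mu|\leq n(rn+r-2)/2$ is inherited from the total-degree bound stated just before the theorem, proved by adapting the Garsia--Haiman argument of~\cite{garsia_haiman} from $r=2$ to arbitrary $r$. The main obstacle lies in this last quantitative step: one must verify that the maximal total degree attained in $\Harm^{(r)}$ for $W=G(p,1,n)$ is at most $n(rn+r-2)/2$, via a careful analysis of the Jacobian-type polynomials (whose degrees are controlled by the degrees of $W$) that enter the Garsia--Haiman argument. Once this is in hand, the homogeneity $\deg s_\mu=|\mu|$ forces $0\leq d\leq n(rn+r-2)/2$ for every $\mu$ with $c_\mu\neq 0$, completing the proof.
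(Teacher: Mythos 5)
Your proposal is correct and follows essentially the same route as the paper: $GL_r$-character theory for Schur positivity, the restriction identity \pref{restriction} plus the part bound for stabilization of the coefficients in $r$, the adapted Garsia--Haiman total-degree bound for the range of $|\mu|$, and the Cauchy/Howe decomposition of $\mathrm{Sym}(\C^r\otimes\C^n)$ for the bound $\ell(\mu)\leq n$ --- the last being exactly the representation-theoretic form of the paper's plethystic computation with $h_n[H(\mbf{q})]$. Like the paper, you leave the quantitative degree bound to the cited adaptation of \cite{garsia_haiman}, and you rightly flag that as the one step not carried out in detail.
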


In many of the cases considered here,  the graded Hilbert series of $\Harm$ seems to  take the form
  \begin{equation}\label{hpositif}
         \bleu{\Harm(\mbf{q})= \sum_{\mu} a_\mu\, h_\mu(\mbf{q})},
  \end{equation}
with the sum being over a finite set of partitions, and the $a_\mu$ positive integers. When this last property holds, one says that $ \Harm(\mbf{q})$ is \defn{$h$-positive}.
In the case of the symmetric group (at least), writing $\Harm[n]$ for $\Harm[\S_n]$, an even finer $h$-positivity phenomenon seems to occur. It involves the decomposition into irreducibles of the homogeneous components of $\Harm[n]$. This is all encompassed into the \defn{graded Frobenius characteristic} of $\Harm[n]$:
    \begin{displaymath}\bleu{\Harm[n](\mbf{w};\mbf{q}):=\sum_{d\in \N^3} \mbf{q}^\mbf{d} \mathcal{F}_{\Harm[n;\mbf{d}]}(\mbf{w})}.\end{displaymath}
Recall that the Frobenius characteristic $\mathcal{F}_\mathcal{V}(\mbf{w})$, of a $\S_n$-module $\mathcal{V}$, is the symmetric function (in auxiliary variables $\mbf{w}=w_1,w_2,\ldots$) whose expansion in terms of the Schur functions $S_\lambda(\mbf{w})$ records the multiplicity of irreducibles in $\mathcal{V}$. This is to say that we have
     \begin{displaymath} \mathcal{F}_\mathcal{V}(\mbf{w})= \sum_{\lambda\vdash n} b_\lambda\, S_\lambda(\mbf{w}),\end{displaymath}
  with the sum being over partitions of $n$, and $b_\lambda$ giving the multiplicity of the irreducible representation classified by $S_\lambda(\mbf{w})$. Our previous arguments show that there is a universal expansion of $\Harm[n](\mbf{w};\mbf{q})$ in terms of the $S_\lambda(\mbf{w})$, having Schur positive coefficients in the $s_\mu(\mbf{q})$. This is to say that
  \begin{theorem}\label{mainthmfrob}
   \begin{equation}
     \bleu{ \Harm[n](\mbf{w};\mbf{q})=\sum_{\lambda\vdash n} \Big(\sum_{\mu} b_{\lambda,\mu} s_\mu(\mbf{q})\Big) S_\lambda(\mbf{w})},\qquad n_{\lambda,\mu}\in\N.
   \end{equation}
 where the $n_{\lambda,\mu}$ are independent of $r$, with $\mu$ running over partitions of $d$ $(0\leq d\leq \binom{n}{2})$, having at most $n$ parts.
\end{theorem}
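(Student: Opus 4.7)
The plan is to lift the Hilbert-series decomposition of Theorem~\ref{mainthmhilb} to the level of $\S_n$-isotypic pieces. Since $GL_r$ acts on the row-indices of $X$ and $\S_n$ acts on the column-indices, the two actions commute and each preserves the ideal $\Ideal^{(r)}$ (the $GL_r$-action because $\Ideal^{(r)}$ is $\N^r$-homogeneous, the $\S_n$-action by construction), so $\Harm[n]^{(r)}$ is canonically a $GL_r\times\S_n$-bimodule. Decomposing into $\S_n$-isotypic components yields
\[
\Harm[n]^{(r)} \;\cong\; \bigoplus_{\lambda\vdash n} \mathcal{V}_\lambda \otimes M_\lambda^{(r)},\qquad M_\lambda^{(r)}:=\mathrm{Hom}_{\S_n}\!\bigl(\mathcal{V}_\lambda,\Harm[n]^{(r)}\bigr),
\]
where $\mathcal{V}_\lambda$ has Frobenius characteristic $S_\lambda(\mathbf{w})$. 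Each $M_\lambda^{(r)}$ is a finite-dimensional polynomial $GL_r$-module, hence its $GL_r$-character is a positive integer combination $\sum_\mu b_{\lambda,\mu}^{(r)} s_\mu(\mathbf{q})$. Taking the graded Frobenius characteristic of the bimodule decomposition then gives, for any individual $r$, an expansion of the form asserted in the theorem.

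For the constraints on $\mu$: since $\C[X]\cong\mathrm{Sym}(\C^r\otimes\C^n)$ as a $GL_r\times GL_n$-module, Cauchy's formula forces every $GL_r$-irreducible occurring in $\C[X]$ to be indexed by a partition with $\ell(\mu)\le\min(r,n)$, so $\ell(\mu)\le n$ holds uniformly in $r$. For the size bound, every element of a $GL_r$-isotypic piece of type $\mu$ has total degree exactly $|\mu|$, and the adaptation of the Garsia--Haiman argument mentioned in the text bounds the total degree of elements of $\Harm[n]^{(r)}$ by $\binom{n}{2}$, the degree of the $\S_n$-Jacobian (the Vandermonde).

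The remaining and essential point is that $b_{\lambda,\mu}^{(r)}$ does not depend on $r$. The natural inclusion $\Harm[n]^{(r-1)}\hookrightarrow\Harm[n]^{(r)}$ is $\S_n$-equivariant and identifies $\Harm[n]^{(r-1)}$ with the subspace of $\Harm[n]^{(r)}$ of multidegree $(\ast,\ldots,\ast,0)$. Consequently \pref{restriction} upgrades to the character identity
\[
\mathrm{char}_{GL_{r-1}}\!\bigl(M_\lambda^{(r-1)}\bigr) \;=\; \mathrm{char}_{GL_r}\!\bigl(M_\lambda^{(r)}\bigr)\Big|_{q_r=0}
\]
for every $\lambda\vdash n$. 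Using $s_\mu(q_1,\ldots,q_{r-1},0)=s_\mu(q_1,\ldots,q_{r-1})$ when $\ell(\mu)\le r-1$ and $=0$ when $\ell(\mu)=r$, together with linear independence of the Schur polynomials in $r-1$ variables indexed by partitions of length at most $r-1$, yields $b_{\lambda,\mu}^{(r)}=b_{\lambda,\mu}^{(r-1)}$ whenever $\ell(\mu)\le r-1$. Combined with the length bound $\ell(\mu)\le n$, no new coefficient can enter once $r\ge n$, so the $b_{\lambda,\mu}^{(r)}$ stabilize to the asserted $b_{\lambda,\mu}\in\N$.

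The main obstacle I anticipate is precisely this last lifting: that setting $q_r=0$ in the $GL_r$-character of $M_\lambda^{(r)}$ really produces the $GL_{r-1}$-character of $M_\lambda^{(r-1)}$, and not merely a symmetric polynomial with the same Hilbert specialisation. Concretely, one must check that the multidegree-$(\ast,\ldots,\ast,0)$ subspace of $\Harm[n]^{(r)}$ coincides, \emph{as an $\S_n$-module}, with $\Harm[n]^{(r-1)}$, i.e.\ that the harmonicity conditions imposed by $\Ideal^{(r)}$ restrict exactly to those imposed by $\Ideal^{(r-1)}$ on polynomials in the first $r-1$ rows of $X$. This is intuitive from the construction and can be verified by comparing generating sets of the two ideals, but it is the key compatibility on which the $r$-independence of the coefficients ultimately rests.
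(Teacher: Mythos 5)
Your proposal is correct and follows essentially the same route as the paper: the $GL_r\times\S_n$-bimodule structure giving Schur-positivity of each isotypic multiplicity space, the Garsia--Haiman total-degree bound for $|\mu|\le\binom{n}{2}$, the Cauchy formula for $\poly$ forcing $\ell(\mu)\le n$, and the restriction $q_r\mapsto 0$ together with the length bound to stabilize the coefficients. You spell out (correctly) several steps the paper leaves implicit, in particular the $\S_n$-equivariant upgrade of~\pref{restriction}, but the underlying argument is the same.
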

We underline that there are two kinds of Schur function at play here. Those in the $\mbf{q}$-variables (denoted by a lower case ``s'' and with the $\mbf{q}$-variables dropped),  that account  for graded multiplicities, and those in the $\mbf{w}$-variables,  that account for the decomposition into $\S_n$-irreducibles. 
For $n$ up to $5$, the expansion of $\Harm[n](\mbf{w};\mbf{q})$ in terms of the monomial symmetric functions $m_\lambda(\mbf{w})$ is $h$-positive in the $\mbf{q}$ variables. In formula,
   \begin{equation}\label{formule_conj}
      \bleu{ \Harm[n](\mbf{w};\mbf{q})=\sum_{\lambda\vdash n} \Big(\sum_{\mu} a_{\lambda,\mu} h_\mu(\mbf{q})\Big) m_\lambda(\mbf{w})},
   \end{equation}
 with $a_{\lambda,\mu}\in\N$ independent of $r$.
For example, we have
\begin{equation}\label{valeurs_Frob_mh}
    \Harm[3](\mbf{w};\mbf{q}) = m_{{3}}(\mbf{w})+ \left( 1+h_{{1}}+h_{{2}} \right) m_{{21}}(\mbf{w})+ \left( 1+2\,h_{{1}
}+h_{{2}}+h_{{3}}+{h_{{11}}} \right) m_{{111}}(\mbf{w}),
\end{equation}
once again with the $\mbf{q}$-variables dropped. Since the coefficient of $m_{11\cdots 1}(\mbf{w})$ is the Hilbert series of the underlying representation, the $h$-positivity of~\pref{formule_conj} implies that $\Harm[n](\mbf{q})$ is also $h$-positive, on top of being universal. Hence, in such cases the Hilbert series of $\Harm[n]$ would have to take the form 
  \begin{equation}\label{formule_hilbert}
       \bleu{  \Harm[n](\mbf{q})= \sum_{\sigma\in \S_n} h_{\mu(\sigma)}(\mbf{q})},
  \end{equation}
with $\mu(\sigma)$ some partition of the number of inversions of $\sigma$.
Direct calculations give the universal expansions
 \begin{equation}\label{cas_S_n}
\begin{array}{rcl}
    \Harm[1](\mbf{q}) &=&1,\\[4pt]
  \Harm[2](\mbf{q})&=&1+h_1,\\[4pt]
   \Harm[3](\mbf{q})&=&1+2\,h_1+h_2+h_{11}+h_3,\\[4pt]
   \Harm[4](\mbf{q})&=& 1+3\,h_1+2\,h_2+3\,h_{11}+2\,h_3+3\,h_{21}+h_{111},\\
    &&\qquad \qquad+h_4+4\,h_{31}+2\,h_5+h_{41}  +h_6 ,\\[4pt]
   \Harm[5](\mbf{q})&=&1+4\,h_{1}+3\,h_{2}+6\,h_{11}+3\,h_{3}+8\,h_{21}+
4\,h_{111}\\
\qquad\qquad&& +2\,h_{4}+9\,h_{31}+2\,h_{2,2}+6\,h_{211}+h_{1111}\\
\qquad\qquad&& +3\,h_{5}+4\,h_{41}+5\,h_{3,2}+10\,h_{311}\\
\qquad\qquad&& +h_{6}+ 9\,h_{51}+h_{4,2}+5\,h_{411}+4\,h_{3,3}\\
\qquad\qquad&& +2\,h_{7}+9\,h_{61}+2\,h_{52}+h_{511}+h_{4,3}\\
\qquad\qquad&& +4\,h_{8}+4\,h_{71}+h_{62}+3\,h_{9}+h_{81}+h_{\overline{10}},
 \end{array}
 \end{equation}
with $h_{\overline{10}}$ being indexed by a one part partition.
From the above data, we might expect (as was conjectured in first drafts of this paper) that we always have $h$-positivity, but  this fails to hold in general. Indeed\footnote{As recently calculated by M.~Haiman.}, the degree $9$ term of the Hilbert series $\Harm[6](\mbf{q})$ is
\begin{eqnarray*}
   &&\bleu{\rouge{-h_{{9}}}+18\,h_{{1}}h_{{8}}+2\,h_{{2}}h_{{7}}+17\,h_{{3}}h_{{6}}+7\,h_{{4}}h_{{5}}}\\
   &&\qquad\ \bleu{+28\,{h_{{1}}}^{2}h_{{7}}+12\,h_{{1}}h_{{2}}h_{{6}}+5\,h_{{1}}h_{{3}}h_{{5}}+h_{{1}}{h_{{4}}}^{2}+{h_{{1}}}^{3}h_{{6}}.
}
\end{eqnarray*}
Still, as discussed in section~\ref{low}, low degree terms of $\Harm[n](\mbf{w},\mbf{q})$ are $h$-positive. 

    Specializing the universal formula \pref{mainthmfrob}, we get the following polynomial formulas (in the parameter $r$) for the dimension of the spaces $\Harm^{(r)}$:
\begin{equation}\label{formule_dim}
 \begin{array}{rcl}
\dim \Harm[1] &=& 1\\[4pt]
\dim \Harm[2] &=& 1+r\\[4pt]
\dim \Harm[3] &=& (1+r)^2+\binom{r+1}{2}+\binom{r+2}{3}\\[4pt]
\dim \Harm[4] &=& (1+r)^3+2\binom{r+1}{2}+3\,r\binom{r+1}{2}+2\binom{r+2}{3}\\[4pt]
    &&\qquad +4\,r\binom{r+2}{3}+\binom{r+3}{4}+r\binom{r+3}{4}+2\binom{r+4}{5}+\binom{r+5}{6}
 \end{array} 
 \end{equation}
In particular, at $r=1$ these expressions evaluate to $n!$, and at $r=2$ they evaluate to $(n+1)^{n-1}$. In~\cite{trivariate}, we discuss the apparent fact that,
at $r=3$, formulas~\pref{formule_dim} should further specialize to $2^n(n+1)^{n-2}$. There is apparently no such nice formula for $r>3$. 

To better see how we may verify the above formulas, let us observe that we have the following basis for $ \Harm[2]$
   \begin{displaymath}
          \{1,x_{12}-x_{11},x_{22}-x_{21},\ldots, x_{r2}-x_{r1}\}.
    \end{displaymath}
Taking into account the action of $\S_2$, we can then easily calculate  that
   \begin{eqnarray}
        \Harm[2](\mbf{w};\mbf{q})&=& S_2(\mbf{w})+(q_1+q_2+\ldots+q_r)\,S_{11}(\mbf{w})\\
                                                 &=& m_2(\mbf{w})+ (1+h_1)\,m_{11}(\mbf{w}),
    \end{eqnarray}
 as well as
   \begin{displaymath}
          \Harm[2](\mbf{q})=1+q_1+q_2+\ldots+q_r.
    \end{displaymath}
Similar explicit calculations (with some help from the computer) give the universal expansions
\begin{equation}\label{multigraded}
  \begin{array}{rcl}
  \Harm[3](\mbf{w};\mbf{q})& =& S_3(\mbf{w}) + \bleu{\big(}{s_2+s_1}\bleu{\big)}\cdot S_{{21}}(\mbf{w})  
                + \bleu{\big(}{s_3+s_{11}} \bleu{\big)}\cdot S_{{111}}(\mbf{w}), \\[4pt]
  \Harm[4](\mbf{w};\mbf{q})& =&S_4(\mbf{w})+ \bleu{\big(}{s_3+s_2+s_1 }\bleu{\big)}\cdot S_{{31}}(\mbf{w})\\[4pt]
      &&\quad+  \bleu{\big(}{s_{4}+s_{21}+s_2} \bleu{\big)}\cdot  S_{{22}}(\mbf{w})\\[4pt]
      &&\quad + \bleu{\big(}{s_5+s_4+s_{31}+s_3+s_{21}+s_{11}}\bleu{\big)}\cdot S_{{211}}(\mbf{w})\\[4pt]
      &&\quad + \bleu{\big(}{s_{6}+s_{41}+s_{31}+s_{111} }\bleu{\big)}\cdot S_{{1111}}(\mbf{w}).\\[4pt]
  \Harm[5](\mbf{w};\mbf{q})&=& S_{{5}}(\mbf{w})+ \bleu{\big(} s_{{4}}+s_{{3}}+s_{{2}}+s_{{1}}\bleu{\big)}S_{{41}}(\mbf{w})\\[4pt]
             &&\quad+  \bleu{\big(} s_{{6}}+s_{{5}}+s_{{41}}+s_{{4}}+s_{{31}}+s_{{22}}+s_{{3}}+s_{{21}}+s_{{2}} \bleu{\big)} S_{{32}}(\mbf{w}) \\[4pt]   
             &&\quad+  \bleu{\big(} s_{{7}}+s_{{6}}+s_{{51}}+2\,s_{{5}}+s_{{32}}+s_{{41}}+s_{{4}}\\
                     &&\qquad\qquad\qquad\qquad\qquad\qquad  +2\,s_{{31}}+s_{{3}}+s_{{21}}+s_{{11}} \bleu{\big)} S_{{311}}(\mbf{w}) \\[4pt]   
            &&\quad+  \bleu{\big(} s_{{8}}+s_{{7}}+s_{{61}}+s_{{6}}+2\,s_{{51}}+s_{{42}}+s_{{5}}+2\,s_{{41}}\\
                     &&\qquad\qquad +s_{{32}}+s_{{311}}+s_{{4}}+s_{{31}}+s_{{22}}+s_{{211}}+s_{{21}} \bleu{\big)} S_{{221}}(\mbf{w}) \\[4pt]

             &&\quad+  \bleu{\big(} s_{{9}}+s_{{8}}+s_{{71}}+s_{{7}}+2\,s_{{61}}+s_{{52}}+s_{{6}}+2\,s_{{51}}+s_{{42}}\\
                     &&\qquad +s_{{411}}+s_{{33}}+2\,s_{{41}}+s_{{32}}+s_{{311}}+s_{{31}}+s_{{211}}+s_{{111}} \bleu{\big)} S_{{2111}}(\mbf{w}) \\[4pt]

             &&\quad+  \bleu{\big(} s_{{\overline{10}}}+s_{{81}}+s_{{71}}+s_{{62}}+s_{{61}}+s_{{511}}+s_{{43}}+s_{{42}}\\
                     &&\qquad\qquad\qquad\qquad\qquad\qquad\qquad  +s_{{411}}+s_{{311}}+s_{{1111}}\bleu{\big)}(\mbf{w}) S_{{11111}}
\end{array}
\end{equation}
where, once again, we write $s_{\overline{10}}$  to make clear that the index is a one part partition.
  We may also specialize to $1$ all  the $q_i$ in $\Harm[n](\mbf{w};\mbf{q})$, using the well known evaluation
\begin{equation}
   s_\mu(1^k)= s_\mu(\underbrace{11\cdots1}_{r\ {\rm copies}})=\prod_{(i,j)\in \mu} \frac{r+j-1}{h_{ij}(\mu)},
 \end{equation}
with $h_{ij}(\mu)$ denoting the hook length associated to the cell $(i,j)$ in the diagram of $\mu$. The resulting expressions have coefficients that are polynomials in $r$. For example,
\begin{equation}\label{multiplicities}
  \begin{array}{rcl}
  \Harm[2](\mbf{w};1^r)& =&S_2(\mbf{w}) +r\, S_{{11}}(\mbf{w}), \\[4pt]
  \Harm[3](\mbf{w};1^r)& =& S_3(\mbf{w}) +\frac{1}{2}r \left( r+3 \right) S_{21} (\mbf{w}) +\frac{1}{6}r \left( r^2+6\,r-1\right)S_{111} (\mbf{w}), \\[4pt]
  \Harm[4](\mbf{w};1^r)& =&S_4(\mbf{w})+ \frac{1}{6}r \left(r^2+6\,r+11 \right) S_{{31}}(\mbf{w})\\[4pt]
 &&\quad+  \frac{1}{24}r \left( r+1 \right)  \left( r^2+13\,r+10 \right)  S_{{22}}(\mbf{w}),\\[4pt]
&&\quad + \frac {1}{120}r \left( r+3\right)  \left( {r}^{3}+27\,{r}^{2}+74\,r-12 \right) S_{{211}}(\mbf{w}),\\[4pt]
&&\quad + \frac {1}{720}r \left( {r}^{5}+39\,{r}^{4}+295\,{r}^{3}+645\,{r}^{2}-296\,r+36 \right)S_{{1111}}(\mbf{w}),
\end{array}
\end{equation}
At $r=2$ the coefficient of $S_{11\cdots 1}$, in  $\Harm[n](\mbf{w};1^r)$, is the $n^{\rm th}$ Catalan number, and at $r=3$ it appears to be the number of intervals in the Tamari Lattice (see~\cite{trivariate}).
 
 We may also specialize formulas (\ref{multigraded}) by setting $q_1=t$, and $q_i=0$ for $i\geq 2$. From classical results on the coinvariant space for $\S_n$, as well as typical calculations on symmetric functions\footnote{For more on this, see   \cite{livre}.}, we get
  \begin{equation}\label{specun}
       \bleu{\Harm[n](\mbf{w};t)=\sum_{\lambda\vdash n} \qbinom{n}{\lambda}_t\,m_\lambda(\mbf{w})},
    \end{equation}
 where 
  \begin{displaymath}
     \bleu{\qbinom{n}{\lambda}_t:=\frac{n!_t}{\lambda_1!_t\cdots \lambda_k!_t}},
 \end{displaymath} 
with
  \begin{displaymath}
     \bleu{n!_t:=(1+t)(1+t+t^2)\cdots (1+t+\ldots+t^{n-1})}.
 \end{displaymath} 
In view of results in \cite{haimanhilb}, we must also have that
  \begin{equation}
       \Harm[n](\mbf{w};q,t)=\nabla(e_{n+1}(\mbf{w}))
    \end{equation}
 where $\nabla$ is an operator on symmetric functions, having Macdonald symmetric functions as eigenfunctions. This imposes further constraints on the form of Formula~\pref{mainthmfrob}.

Similar situations are settled by the following for two infinite families of groups.
\begin{theorem}\label{thm_I2m}
For the dihedral groups $I_2(m)=G(m,m,2)$, the cyclic groups $\mathcal{C}_m=G(m,1,1)$, and the groups $G(m,1,2)$, we have the respective universal $h$-positive Hilbert series
  \begin{eqnarray}
        \bleu{\Harm[\mathcal{C}_n](\mbf{q})}&=&\bleu{ \sum_{j=0}^{n} h_j(\mbf{q})},\label{formule_cyclic}\\
        \bleu{\Harm[I_2(m)](\mbf{q})}&=&\bleu{1 + 2\,h_1(\mbf{q})+h_{11}(\mbf{q})+h_2(\mbf{q})  +2\, \sum_{j=3}^{m-1} h_j(\mbf{q}) + h_m(\mbf{q})},
                      \label{formule_I2m}\\
        \bleu{\Harm[G(m,1,2)](\mbf{q})}&=&\bleu{ \Big( \sum _{k=0}^{m-1}h_{{k}} \Big) ^{\!\!2}+\sum _{k=0}^{m-1} \left( k+1 \right) h_{{m+k}}+\sum _{k=1}^{m-1} \left( m-k \right) h_{{2\,m-1+k}}}\label{formule_Gm12}
           \end{eqnarray}
 \end{theorem}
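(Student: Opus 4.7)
The plan is to handle each of the three families of groups separately, using in each case an explicit description of the generators of $\Ideal^{(r)}$ in coordinates adapted to the cyclic part of the group, followed by a bidegree-by-bidegree analysis of $\Harm^{(r)}$.

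For $\mathcal{C}_n=G(n,1,1)$ acting on $\C[x_1,\ldots,x_r]$ by $x_i\mapsto\zeta x_i$, every monomial of total degree exactly $n$ is $W$-invariant, so these generate $\Ideal^{(r)}$ as the $n$-th power $(x_1,\ldots,x_r)^n$ of the irrelevant ideal. Hence $\CW^{(r)}\cong\C[x_1,\ldots,x_r]/(x_1,\ldots,x_r)^n$, whose multigraded Hilbert series is obtained by summing $h_k(\mbf{q})$ over $k$ in the appropriate range below $n$, yielding \pref{formule_cyclic}.

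For $I_2(m)=G(m,m,2)$, I would change to coordinates $(z_i,\bar z_i)$ that diagonalize the cyclic subgroup, so the rotation acts by $(z_i,\bar z_i)\mapsto(\zeta z_i,\zeta^{-1}\bar z_i)$ and the reflection swaps $z_i\leftrightarrow\bar z_i$. Then $\Ideal^{(r)}$ is generated by the polarized degree-$2$ forms $p_{ij}=z_i\bar z_j+z_j\bar z_i$ for $1\leq i\leq j\leq r$ and by the polarized degree-$m$ forms $Q_{\underline k}=z_{k_1}\cdots z_{k_m}+\bar z_{k_1}\cdots\bar z_{k_m}$. I would analyze $\Harm^{(r)}$ bidegree-by-bidegree in the $(z,\bar z)$-grading: at pure bidegrees $(a,0)$ and $(0,a)$ with $a\leq m-1$ no generator of $\Ideal^{(r)}$ acts nontrivially, contributing $h_a(\mbf{q})$ on each side; at the mixed bidegree $(1,1)$ the common kernel of the Laplacian-type operators $p_{ij}(\partial)$ is the antisymmetric part spanned by $z_i\bar z_j-z_j\bar z_i$, with $GL_r$-character $e_2(\mbf{q})=h_{11}(\mbf{q})-h_2(\mbf{q})$; at higher mixed bidegrees $(a,b)$ with $a,b\geq 1$ and $a+b\leq m-1$ the Laplacian system is injective and contributes nothing; and at total degree $m$ the polarized $z^m+\bar z^m$ operator identifies the two pure contributions by a sign, collapsing $2h_m$ to a single $h_m$. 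Summing and using $2h_2+e_2=h_2+h_{11}$ produces \pref{formule_I2m}. For $G(m,1,2)$ the analysis is parallel: the diagonal cyclic subgroup is $\mathcal{C}_m\times\mathcal{C}_m$, there are no degree-$2$ invariants, and $\Ideal^{(r)}$ is generated by polarizations of $x^m+y^m$ (degree $m$) and $x^my^m$ (degree $2m$). All polynomials of total degree less than $m$ are harmonic, giving the bigraded block $\bigl(\sum_{k=0}^{m-1}h_k(\mbf{q})\bigr)^2$; a direct kernel analysis of the polarized degree-$m$ and degree-$2m$ operators in the ranges $[m,2m-1]$ and $[2m,3m-2]$ respectively produces the remaining sums in \pref{formule_Gm12}.

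The main obstacle is the $I_2(m)$ case. Showing that mixed bidegrees $(a,b)$ with $a,b\geq 1$ and $a+b\geq 3$ contribute nothing to $\Harm^{(r)}$ amounts to a $GL_r$-equivariant injectivity statement for the system of Laplacians $p_{ij}(\partial)$ that is most cleanly handled via the Howe-duality framework for the reductive dual pair $(O_2,GL_r)$, identifying $\Harm^{(r)}$ at low degrees with the $O_2$-harmonic polynomials in $r$ vector variables; and the collapse $2h_m\mapsto h_m$ at total degree $m$ requires tracking how the polarized $z^m+\bar z^m$ operator identifies the two copies of $\mathrm{Sym}^m(\C^r)$ through a sign. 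Both steps are explicit computations, but the bookkeeping is delicate, and the representation-theoretic framing is what makes the resulting expressions manifestly $h$-positive.
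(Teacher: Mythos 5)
Your proposal follows essentially the same route as the paper: in all three cases the paper also obtains the generators of $\Ideal^{(r)}$ by Weyl polarization of the classical invariants (degree $n$ for the cyclic group; degrees $2$ and $m$ for $I_2(m)$, written there in real coordinates as the coefficients of $\big(\sum_i x_it_i\big)^m+\big(\sum_i y_it_i\big)^m$ and $\big(\sum_i x_it_i\big)\cdot\big(\sum_i y_it_i\big)$; degrees $m$ and $2m$ for $G(m,1,2)$), and then describes the harmonics explicitly. Where you genuinely add substance is the completeness step: the paper expands \pref{formule_I2m} as $1+s_{11}+s_m+2\sum_{k=1}^{m-1}s_k$, exhibits a highest-weight vector for each term (the one-row terms from classical $I_2(m)$-harmonics, $s_{11}$ from the span of the $x_iy_j-x_jy_i$), and then simply asserts that ``one then checks directly that all diagonal harmonics are accounted in this manner.'' Your bidegree-by-bidegree kernel analysis in the $(z,\bar z)$-coordinates, with the $(O_2,GL_r)$ Howe duality supplying the vanishing of the joint kernel of the polarized Laplacians outside the pure bidegrees and the $(1,1)$-antisymmetric piece, is a legitimate and more systematic way of carrying out that check, and your bookkeeping ($2h_2+e_2=h_2+h_{11}$, the collapse to a single $h_m$ at the top) reproduces the paper's answer exactly. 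Two small cautions. First, in the $G(m,1,2)$ case the block $\bigl(\sum_{k=0}^{m-1}h_k\bigr)^2$ is not ``all polynomials of total degree less than $m$'' (it contains bidegrees up to $(m-1,m-1)$, of total degree $2m-2$), so that block is itself part of the kernel analysis you defer to the ranges $[m,2m-1]$ and $[2m,3m-2]$; the degree-$<m$ observation only gives you the terms with $a+b<m$ for free. Second, your cyclic-case computation yields $\C[x_1,\ldots,x_r]/(x_1,\ldots,x_r)^n$ with Hilbert series $\sum_{j=0}^{n-1}h_j(\mbf{q})$, which agrees with the paper's own proof (``monomials of degree at most $m-1$'') but not with the upper limit as printed in \pref{formule_cyclic}; the discrepancy is an off-by-one in the statement, not in your argument.
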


\section{Proofs}\label{preuves}
\begin{proof}[\bleu{\bf Proof of Theorem~\ref{mainthmhilb} and ~\ref{mainthmfrob}}] In both theorems, it remains only to show that the $\mbf{q}$-variables Schur polynomials involved are indexed by partitions having at most $n$-parts, since every such Schur polynomial occurs as the character of an irreducible $GL_r$-sub-representation of the space $\poly$. Indeed, this property holds the Hilbert series (or $GL_r$-character) of the space $\poly$
which is given by the following formula
   \begin{equation}\label{hilbpoly}
           \bleu{\poly(\mbf{q})= h_n\left[ H(\mbf{q})\right]},
    \end{equation}
where 
    \begin{equation}
           \bleu{H(\mbf{q})=\sum_{k=0}^\infty h_k(\mbf{q}) = \prod_{i=1}^r \frac{1}{1-q_i}}.
    \end{equation}
 In Equation~\pref{hilbpoly}, we use a plethystic substitution notation. This means that, in order to calculate the right-hand side, we simply expand every symmetric functions in terms of power sum symmetric functions $p_k=p_k(\mbf{q})$ and apply the following rules:
 \begin{itemize}\itemsep=4pt
 \item $p_k[c]=c$ if $c$ is a constant;
  \item $p_k[p_j]=p_{k\cdot j}$;
  \item $F[f+g]=F[f]+F[g]$, and $(F+G)[f]=F[f]+G[f]$;
  \item $F[f\cdot g]=F[f]\cdot F[g]$, and $(F\cdot G)[f]=F[f]\cdot G[f]$.
    \end{itemize}
For the purpose of such calculations, $\mbf{q}$ is identified with the sum $q_1+\ldots +q_r=p_1$.
Using classical calculations on symmetric functions (see~\cite{macdonald}, we may then check that the only Schur polynomials that occur in the expansion of ~\pref{hilbpoly} are indexed by partitions having at most $n$ parts.\end{proof} 

\begin{proof}[\bleu{\bf{Proof of~\pref{formule_hilbert}}}] To prove Proposition~\ref{positivite_inv}, we observe that, for $r=1$, $\Harm(\mbf{q})$ specializes to the well known  Poincar\'e polynomial of $W$, in the variable $t=q_1$. Thus we must have
\begin{equation}\label{poincare_polynomial}
   \Harm(t):= \sum_{w\in W} t^{\ell(w)}= \prod_{i=1}^n\frac{t^{d_i}-1}{t-1},
\end{equation}
with the $d_i$'s standing for the \defn{degrees}  of the group $W$, and $\ell(w)$ is the length function. Perforce, if the universal Hilbert series $\Harm(\mbf{q})$ is $h$-positive, such as in \pref{hpositif}, then the positive integers $a_\mu$ must be such that
   \begin{displaymath}
       \sum_\mu a_\mu t^{|\mu|} = \sum_{w\in W} t^{\ell(w)}.
   \end{displaymath}
Indeed, the evaluation of $h_\mu$ in one variable $t$ is exactly $t^{|\mu|}$, where $|\mu|$ stands for the sum of the parts of $\mu$. Moreover, for the symmetric group, the ``length'' function $\ell(\sigma)$ corresponds to the number of inversion in $\sigma$. 
\end{proof}

\begin{proof}[\bf \bleu{Proof of Theorem~\ref{thm_I2m}}.]  
Each of the formula is obtained by constructing an explicit basis of the associated space.
The cyclic group $\mathcal{C}_m=G(m,1,1)$ case is almost immediate, since there is but one variable in each of the $r$-sets. The ring of diagonal $\mathcal{C}_m$-invariants is easily seen to be spanned by the set of monomials of total  degree $km$, in the $r$-variables $x_i=x_{i1}$, with $k\in N$. 
It follows that the associated diagonal harmonics are all monomials of degree at most $m-1$, hence the formula.

For both the Dihedral groups and $G(m,1,2)$, each set of variables contains two variables, which we denote by $x_i$ and $y_i$. Just as for the cyclic group case, the construction of an explicit basis of $\Harm$ is relatively easy. We illustrate for the dihedral case. By polarization (see~\cite{weyl}), we get that the ring of diagonal invariant of $I_2(m)$ is generated by the polynomials appearing as coefficients (themselves polynomials in the $x,y$-variables) of the polynomials in the $t_i$-variables:
        \begin{displaymath} 
              \bleu{\Big( \sum_{i=1}^r x_{i}\,t_i\Big)^m+ \Big( \sum_{i=1}^r y_{i}\,t_i\Big)^m},\qquad {\rm and}\qquad
              \bleu{\Big( \sum_{i=1}^r x_{i}\,t_i\Big)\cdot \Big( \sum_{i=1}^r y_{i}\,t_i\Big)}.
        \end{displaymath}
  It is easy to describe explicitly all the polynomial solutions of the resulting partial differential equations.
  
Now, expanded in term of Schur polynomials, Formula \pref{formule_I2m} takes the form
 \begin{equation}
     \bleu{\Harm[I_2(m)](\mbf{q})}=\bleu{1 + s_{11}+s_m+2\,\sum_{k=1}^{m-1} s_k}.
      \end{equation}
 We need only identified the highest weight vectors in $\Harm[I_2(m)]$ associated to each term of this last expansion. All of the terms having one index are easy to identify, they correspond to classical harmonic polynomials for $I_2(m)$. The only term of exception is $s_{11}$. It is readily seen to account for the irreducible component spanned by the polynomials in the set
     \begin{displaymath}\{x_iy_j-x_jy_i\ | \ 1\leq i<j \leq j \}.\end{displaymath} 
  One then checks directly that all diagonal harmonics are accounted in this manner.
\end{proof} 
\section{Low degree components}\label{low}
Even tough we restrict the discussion in this section to the case $W=\S_n$, much of it holds in generality. Our intent here is to compare the spaces $\mathcal{R}_n$
and $\Harm[n]\otimes \mathcal{R}_n^{\S_n}$, where $\mathcal{R}_n^{\S_n}$ is the ring of diagonal $\S_n$-invariants in $r$ sets of variables. In the case $r=1$, it is well known that these two spaces are isomorphic as graded $\S_n$-modules (see~\cite{chevalley}). In other words, $\mathcal{R}_n$ is a free module over the ring of symmetric polynomials.
This immediately implies that we have the explicit formula for 
\begin{equation}
       \bleu{\Harm[n](\mbf{w};q,0,0,\ldots) =  h_n\!\left[ \mbf{w}(1-q)^{-1}\right]\  \prod_{k=1}^n (1-q^k)}.
\end{equation} 
For $r\geq 2$, the space $\mathcal{R}_n$ is not a free module over the ring of diagonally symmetric polynomials. However, we do have
an isomorphism between the low degree homogeneous components of 
$\mathcal{R}_n$ and $\Harm[n]\otimes \mathcal{R}_n^{\S_n}$. This would be made more precise if one could calculate explicitly a free resolution of the quotient involved.
This is the subject of future work. For the time being, let us only observe that this leads to polynomial expressions in $n$ for the low degree coefficients of the Frobenius characteristic $\Harm[n](\mbf{w};\mbf{q})$ and the Hilbert series $\Harm[n](\mbf{q})=\Harm[n](\mbf{q})$.
These are derived using the following explicit expressions for $\mathcal{R}_n(\mbf{w};\mbf{q})$ and $\mathcal{R}_n^{\S_n}(\mbf{q})$:
\begin{equation}
    \bleu{\mathcal{R}_n(\mbf{w};\mbf{q})= h_n\![ \mbf{w}H(\mbf{q})]},\qquad {\rm and}\qquad 
    \bleu{\mathcal{R}_n^{\S_n}(\mbf{q})=   h_n\![ H(\mbf{q})]}.
\end{equation}
We thus get an approximation 
\begin{equation}\label{aprox_frob}
    \bleu{\Harm[n](\mbf{w};\mbf{q})\simeq_{n} \frac{h_n\![ \mbf{w}H(\mbf{q})]}{h_n\![ H(\mbf{q})]}},
\end{equation}
 that experimentally seems to hold for all terms of $\mbf{q}$-degree less or equal to $n$.
 Equivalently, using one of the Cauchy identities (expressing the fact that $\{h_\lambda\}_\lambda$ and $\{m_\lambda\}_\lambda$ are dual bases):
  \begin{equation}
    \bleu{ h_n\![ \mbf{x}\mbf{y}] = \sum_{\lambda\vdash n}  h_\lambda(\mbf{x}) m_\lambda(\mbf{y})},
\end{equation}
  the right-hand side of~\pref{aprox_frob} may be expanded as
 \begin{equation}
    \bleu{ \sum_{\lambda\vdash n} \frac{h_\lambda\![ H(\mbf{q})]}{h_n\![ H(\mbf{q})]}\ m_\lambda(\mbf{w})}.
\end{equation}
In particular, we may approximate the Hilbert series of $\Harm[n]$ by the expansion
  \begin{equation}\label{formule_hilb}
  \begin{array}{rcl}
      \bleu{\displaystyle\frac{ H(\mbf{q})^n}  {h_n\![ H(\mbf{q})]}}&=&\bleu{ 1+ ( n-1) h_1+ ( n-2 ) h_2+\binom{n-1}{2}\, h_1^{2}}\\[6pt]
&&\qquad   \bleu{+ ( n-1 ) ( n-3 )\, h_1h_2+ ( n-2 ) h_3+\binom{n-1}{3}\, h_1^3  +\ldots }
  \end{array}
\end{equation}
Observe here that the right-hand side of \pref{formule_hilb} gives explicit expressions, as polynomials in $n$, for the coefficient of $h_\mu$ in $\Harm[n](\mbf{q})$. These hold for $n\geq 3$. Observe that, using the other Cauchy identity
   \begin{equation}
    \bleu{ h_n\![ \mbf{x}\mbf{y}] = \sum_{\lambda\vdash n}  s_\lambda(\mbf{x}) s_\lambda(\mbf{y})},
\end{equation} 
 we may get a similar formula, describing the small degree isotopic $\S_n$-components of $\Harm[n]$, in the form 
\begin{equation}
    \bleu{\Harm[n](\mbf{w};\mbf{q})\simeq_{n}  \sum_{\lambda\vdash n} \frac{s_\lambda\![ H(\mbf{q})]}{h_n\![ H(\mbf{q})]}\ S_\lambda(\mbf{w})}.
\end{equation}
In this way, we may calculate the following order $4$ approximation
\begin{equation}
\begin{array}{rcl}
    \bleu{\Harm[4](\mbf{w};\mbf{q})}& \simeq_{4} & 
    \bleu{S_{{4}}(\mbf{w})+ \left( s_{{1}}+s_{{2}}+s_{{3}} \right) S_{{31}}(\mbf{w}) + }\\[4pt]
    &&\bleu{ \left( s_{{2}}+s_{{21}}+s_{{4}} \right) S_{{22}}(\mbf{w})+ \left( s_{{4}}+s_{{31}}+s_{{3}
}+s_{{21}}+s_{{11}} \right) S_{{211}}(\mbf{w}) + }\\[4pt]
&&\bleu{ \left( s_{{31}}+s_{{111}}
 \right) S_{{1111}}(\mbf{w}),}
\end{array}
\end{equation}
in which we miss only (see~\pref{multigraded})  the terms $\bleu{s_5\,S_{211} (\mbf{w}) + (s_{41}+s_6)\,S_{1111} (\mbf{w})}$. For sure, many more terms will be missing for larger $n$, since the maximal degree is $\binom{n}{2}$.
 
\section{Concluding remarks and thanks}
The proof of Theorems \ref{mainthmfrob} may readily be adapted to show that all isotypic components of the space $\Harm$ afford a universal Schur-positive expansion, for all finite complex reflection group $W$. 
 
We would like to thank Christian Stump for his independent verifications of some of our experiments leading to this paper. We would also like to thank Mark Haiman for enlightening discussions.


\end{document}